\newtheorem{theorem}{Theorem}
\newtheorem{lemma}{Lemma}
\theoremstyle{definition}
\newtheorem{definition}[theorem]{Definition}
\begin{document}

\title{The maximal order of Stern's diatomic sequence}

\subjclass[2010]{Primary 05A16, 11B37, 11B39}
\keywords{Stern's diatomic sequence, maximal order}

\author{Michael Coons}
\address{School of Mathematical and Physical Sciences\\
University of Newcastle\\
Australia}
\email{Michael.Coons@newcastle.edu.au}

\author{Jason Tyler}
\address{School of Mathematical and Physical Sciences\\
University of Newcastle\\
Australia}
\email{Jason.Tyler@newcastle.edu.au}
\date{\today}
\thanks{The research of M.~Coons was supported by ARC grant DE140100223.}

\maketitle

\begin{abstract} We answer a question of Calkin and Wilf concerning the maximal order of Stern's diatomic sequence. Specifically, we prove that $$\limsup_{n\to\infty}\frac{a(n)}{n^{\log_2 \varphi}}=\frac{\varphi^{\log_2 3}}{\sqrt{5}},$$ where $\varphi=(\sqrt{5}+1)/2$ is the golden ratio. This improves on previous results given by Berlekamp, Conway, and Guy, who showed that the limit value was bounded above by 1.25, and by Calkin and Wilf, who showed that the exact value was in the interval $[({\varphi}/{\sqrt{5}})({3}/{2})^{\log_2\varphi},(\varphi+1)/{\sqrt{5}}].$
\end{abstract}

\section{Introduction}

{\em Stern's Diatomic sequence} (commonly called {\em Stern's sequence}), $\{a(n)\}_{n\geq 0}$, is given by $a(0)=0$, $a(1)=1$, and when $n\geq 1$, by $$a(2n)=a(n)\quad\mbox{and}\quad a(2n+1)=a(n)+a(n+1).$$ In a recent survey article, Northshield \cite{N2010} restated a question of Calkin and Wilf \cite{CW1998}, which asks for the exact value of $\limsup_{n\to\infty}{a(n)}/{n^{\log_2 \varphi}}$, where $\varphi=(\sqrt{5}+1)/2$ is the golden ratio and $\log_2 n$ denotes the base-$2$ logarithm of $n$; that is, they asked one to {\em determine the exact maximal order of the Stern sequence}. This question goes back at least to the 1982 book of Berlekamp, Conway, and Guy \cite[page 115]{BCG1982} who showed that $a(n-1)$ is the number of nim-sums corresponding to a given ordinary sum $n$, and gave an upper bound of 1.25 for the limit in question. Calkin and Wilf \cite{CW1998} improved on the bounds; they showed that $$0.958854\cdots=\frac{\varphi}{\sqrt{5}}\left(\frac{3}{2}\right)^{\log_2\varphi}\leq \limsup_{n\to\infty}\frac{a(n)}{n^{\log_2 \varphi}}\leq\frac{\varphi+1}{\sqrt{5}}=1.170820\cdots.$$ We answer this question by proving the following theorem.

\begin{theorem}\label{main} Let $\{a(n)\}_{n\geq 0}$ denote the Stern sequence. Then $$\limsup_{n\to\infty}\frac{a(n)}{n^{\log_2 \varphi}}=\frac{\varphi}{\sqrt{5}}\left(\frac{3}{2}\right)^{\log_2\varphi}=\frac{\varphi^{\log_2 3}}{\sqrt{5}}=0.9588541900\cdots.$$
\end{theorem}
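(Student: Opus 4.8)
The plan is to turn the recursion for $a(n)$ into products of the matrices $L=\bigl(\begin{smallmatrix}1&0\\1&1\end{smallmatrix}\bigr)$ and $R=\bigl(\begin{smallmatrix}1&1\\0&1\end{smallmatrix}\bigr)$, to read off from this a continuant formula for $a(n)$ in terms of the binary digits of $n$, and then to optimize. One checks at once that the relations $a(2n)=a(n)$, $a(2n+1)=a(n)+a(n+1)$, $a(2n+2)=a(n+1)$ say exactly that the column vector with entries $a(n),a(n+1)$ is sent to the column vector with entries $a(2n),a(2n+1)$ by $L$ and to the one with entries $a(2n+1),a(2n+2)$ by $R$. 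Hence if the binary expansion of $n$ has runs of lengths $s_1,\dots,s_r$ (reading $1^{s_1}0^{s_2}1^{s_3}\cdots$ from the most significant bit), then multiplying out a product of the shape $R^{s_r}L^{s_{r-1}}\cdots\bigl(\begin{smallmatrix}1\\1\end{smallmatrix}\bigr)$ — and using that $L^k,R^k$ grow only linearly in $k$ whereas $LR$ has spectral radius $\varphi^2$ — gives $a(n)=K(s_1,\dots,s_{r'})$, the continuant, where $r'=r$ if the last run is a run of ones and $r'=r-1$ otherwise.

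For the lower bound, which is essentially due to Calkin and Wilf, I would take the alternating pattern $n_j=(1010\cdots01)_2$ with $2j+1$ binary digits, so that $n_j=(4^{j+1}-1)/3$ and every run has length $1$; then $a(n_j)=K(1,1,\dots,1)=F_{2j+2}$, where $F_m$ is the $m$th Fibonacci number. Since $F_m\sim\varphi^m/\sqrt5$ and $n_j\sim\tfrac23\cdot 2^{2j+1}$, a short computation gives $a(n_j)/n_j^{\log_2\varphi}\to\frac{\varphi}{\sqrt5}(\tfrac32)^{\log_2\varphi}$, so the limit superior is at least the claimed value.

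The substance of the theorem is the matching upper bound. Since $a$ is constant on $\{2^v m:v\ge0\}$ for $m$ odd while $(2^vm)^{\log_2\varphi}$ grows with $v$, it is enough to bound $a(n)/n^{\log_2\varphi}$ for odd $n$; and since a trailing run of $0$'s only increases $n$ without changing $a(n)$, I may assume the last run is a run of ones, so $a(n)=K(s_1,\dots,s_r)$ with $\ell:=s_1+\cdots+s_r$ the bit length and $n=2^\ell\theta$, where $\theta=\theta(s_1,\dots,s_r)=\sum_{i\text{ odd}}\bigl(2^{-(s_1+\cdots+s_{i-1})}-2^{-(s_1+\cdots+s_i)}\bigr)\in[\tfrac12,1)$. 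The problem then becomes showing
\[
\limsup_{\ell\to\infty}\ \max_{s_1+\cdots+s_r=\ell}\ \frac{K(s_1,\dots,s_r)}{\varphi^{\ell}\,\theta(s_1,\dots,s_r)^{\log_2\varphi}}=\frac{\varphi}{\sqrt5}\Bigl(\frac32\Bigr)^{\log_2\varphi}.
\]
Here the numerator and denominator pull against each other: it is classical that $K(s_1,\dots,s_r)\le F_{\ell+1}$, with equality essentially only when $s_i\equiv1$, but then $\theta\to\tfrac23$; on the other hand forcing $\theta$ toward its minimum $\tfrac12$ requires $s_1=1$ with $s_2$ large, and the front recursion $K(1,s_2,\dots,s_r)=(s_2+1)K(s_3,\dots,s_r)+K(s_4,\dots,s_r)$ then makes $K$ lose a factor $\asymp\varphi^{-s_2}$, which more than cancels the gain $\theta^{-\log_2\varphi}\le 2^{\log_2\varphi}$. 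So one cannot have $K$ near-maximal and $\theta$ near-minimal simultaneously, and the alternating string realizes the optimal balance.

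Turning this into a proof is the part I expect to be the main obstacle. The plan is an induction on the bit length $\ell$: peel off the leading run $s_1$ (equivalently the first one or two binary digits), use $K\le F_{\ell+1}$ whenever the pattern is not near-alternating, and in the near-alternating regime recurse further so as to compare an arbitrary composition $(s_1,\dots,s_r)$ directly against $(1,1,\dots,1)$. The delicate point is that this is an optimization over an \emph{unbounded} family of digit patterns, so the near-extremal competitors — all the ``almost alternating'' $n$ — must be controlled uniformly in $\ell$; and, unlike in the Calkin--Wilf argument, where a crude matrix-norm estimate suffices, the error terms must be managed tightly enough to close at the exact constant $\varphi^{\log_2 3}/\sqrt5$ rather than at an $O(1)$ multiple of it.
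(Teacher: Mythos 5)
Your lower bound is sound: the alternating-pattern numbers $n_j=(4^{j+1}-1)/3$ are exactly the points $m_{2j+2}=(2^{2j+2}-1)/3$ at which the Stern sequence attains the Fibonacci maxima, and the computation $a(n_j)/n_j^{\log_2\varphi}\to\frac{\varphi}{\sqrt5}(\tfrac32)^{\log_2\varphi}$ is correct; this is the Calkin--Wilf half of the statement and is not in dispute. The continuant reformulation of the upper bound is also a legitimate starting point. But the upper bound is the entire content of the theorem, and your proposal does not prove it: you reduce it to the optimization $\limsup_\ell\max_{s_1+\cdots+s_r=\ell}K(s_1,\dots,s_r)/(\varphi^\ell\theta^{\log_2\varphi})$, observe the tension between maximizing $K$ and minimizing $\theta$ at the two extreme patterns, and then explicitly defer the uniform control of all ``almost alternating'' compositions to a planned induction that is never carried out. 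Checking the trade-off only at the extremes ($s_i\equiv 1$ versus $s_1=1$, $s_2$ large) does not rule out intermediate competitors --- e.g.\ alternating strings perturbed by a bounded number of longer runs placed near the top of the expansion, where a small loss in $K$ can be traded against a nontrivial decrease in $\theta$ --- and it is precisely this family that must be beaten by a quantitative margin uniform in $\ell$ to land on the exact constant rather than a constant multiple of it. As written, the argument is a program, not a proof.

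For comparison, the paper avoids the combinatorial optimization altogether. It defines the piecewise linear function $h$ through $(0,0)$ and the maximum points $(m_n,F_n)$, proves $a(m)\le h(m)$ for all $m$ by a short induction on $m$ (split according to $m$ even, $m\equiv1$, $m\equiv3\pmod4$) whose engine is the pair of inequalities $h(4x+1)>h(2x+1)+h(x)$ and $h(4x-1)>h(2x-1)+h(x)$ for $x\ge5$, and then shows $\sqrt5\,h(x)\le\varphi^{\log_2 3}x^{\log_2\varphi}+\varepsilon$ for large $x$ by noting that the difference is convex on each linear piece of $h$ (the second derivative is $-\varphi^{\log_2 3}\log_2\varphi(\log_2\varphi-1)x^{\log_2\varphi-2}>0$), so it cannot have an interior maximum and is controlled by its values at the nodes $m_n$, where it is $O((\varphi/2)^n)$. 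If you wish to salvage your route, you need an analogue of that uniform control: either carry out the induction on compositions with explicit error terms, or replace it by a comparison of $a(n)$ against a global majorant, which is in effect what the paper's $h$ accomplishes.
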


\section{Preliminaries}

It is well-known that the maximum value of $a(m)$ in the interval $2^{n-2}\leq m\leq 2^{n-1}$ is the $n$th Fibonacci number $F_n$ and that this maximum first occurs at $$m_n:=\frac{1}{3}(2^n-(-1)^n);$$ see Lehmer \cite{L1929} and Lind \cite{L1969}.

For our proof, we will use the points $(m_n,a(m_n))$ to produce a continuous function $h(x)$ which is an upper bound for the Stern sequence, and which is asympotitically a lower bound for the function $(\varphi^{\log_2 3}/{\sqrt{5}})x^{\log_2 \varphi}$. We will then use these functions combined with some properties of limits to prove Theorem \ref{main}.

Before giving the proof of Theorem \ref{main}, we give a formal definition of $h(x)$ and provide some useful lemmas concerning $h(x)$ and its relationships to both $a(n)$ and $x^{\log_2 \varphi}$.

\begin{definition} Let $h:\mathbb{R}_{\geq 0}\to\mathbb{R}_{\geq 0}$ denote the piecewise linear function connecting the set of points $\{(0,0)\}\cup\{(m_n,a(m_n)):n\geq 2\}.$
\end{definition}

By definition, $h(x)$ is continuous in $\mathbb{R}_{\geq 0}$ and differentiable in the intervals $(m_n,m_{n+1}).$ Using point-slope form, for $x\in[m_n,m_{n+1}]$, we have \begin{align}\nonumber h(x) &=\frac{a(m_{n+1})-a(m_n)}{m_{n+1}-m_n}(x-m_n)+a(m_n)\\
\label{hfib}&=3\cdot\frac{F_{n-1}}{2^{n}+2(-1)^{n}}x+F_n-F_{n-1}\cdot\frac{2^{n}-(-1)^{n}}{2^{n}+2(-1)^{n}}\\
\label{hasym}&=\frac{1}{\sqrt{5}}\left[\frac{3}{2}\left(\frac{\varphi}{2}\right)^{n-1}x+ \varphi^n\left(1-\frac{(-1)^{n-1}}{\varphi^{2(n-1)}}-\frac{1}{\varphi}\right)\right]\cdot\left(1+O(2^{-n})\right).
\end{align} Here we have used Binet's formula that $F_n=\frac{\varphi^n-(-\varphi)^{-n}}{\sqrt{5}},$ where $\varphi=\frac{1+\sqrt{5}}{2}$ is the golden ratio.

\begin{lemma}\label{hineq} For all $x\geq 5$, we have $$h(4x+1)>h(2x+1)+h(x),$$ and $$h(4x-1)>h(2x-1)+h(x).$$
\end{lemma}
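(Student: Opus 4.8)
The plan is to reduce the two stated inequalities to elementary estimates on the linear pieces of $h$. For $x \geq 5$, the three arguments $4x\pm 1$, $2x\pm 1$, and $x$ each lie in some interval $[m_n, m_{n+1}]$, so by \eqref{hfib} each of $h(4x\pm1)$, $h(2x\pm1)$, $h(x)$ is an explicit affine function of $x$ with coefficients built from Fibonacci numbers and powers of $2$. The first move is therefore to locate the relevant indices: if $x \in [m_k, m_{k+1}]$, then since $m_{n+1} \approx 2 m_n$ and $m_{n+2} \approx 4 m_n$, the point $2x+1$ lies in roughly $[m_{k+1}, m_{k+2}]$ and $4x+1$ in roughly $[m_{k+2}, m_{k+3}]$ — but the crude doubling is only approximate, so I would either (i) subdivide the range of $x$ further so that the indices are pinned down exactly, or, more cleanly, (ii) use the asymptotic form \eqref{hasym} to write $h(y) = \frac{1}{\sqrt5}\bigl(\frac32 (\varphi/2)^{m-1} y + \varphi^m(1 - 1/\varphi) + (\text{small})\bigr)(1+O(2^{-m}))$ and track how the leading term scales when $y$ is replaced by $2y$ and $4y$.

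The heart of the matter is that $h$ is, up to lower-order fluctuation, a multiple of $x^{\log_2\varphi}$, and this power function is superadditive in exactly the way the lemma asserts: writing $g(x) = c\,x^{\log_2\varphi}$ with $c = \varphi^{\log_2 3}/\sqrt5$, one checks $g(4x) = g(2x) + g(x)$ identically, because $4^{\log_2\varphi} = 2^{\log_2\varphi} + 1$ is equivalent to $\varphi^2 = \varphi + 1$. So at the level of the dominant term the inequality is an equality, and the genuine content of Lemma \ref{hineq} is that the $+1$ perturbation in $4x+1$ versus $2x+1$ versus $x$, together with the sign of the $O(2^{-n})$ and $(-1)^n$ corrections, tips the balance strictly in the right direction for all $x \geq 5$. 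Concretely, I would expand $h(4x+1) - h(2x+1) - h(x)$ using \eqref{hfib} on each piece, collect it as $A(x)\, x + B(x)$ where $A, B$ are piecewise-constant in the relevant ranges, and show $A(x) x + B(x) > 0$. Since $h$ is piecewise linear, the difference is piecewise linear in $x$, so on each subinterval it suffices to verify positivity at the two endpoints — reducing everything to a finite check once the index bookkeeping is done. The symmetric argument with $-1$ in place of $+1$ handles the second inequality; the relevant sign changes in the $(-1)^n$ terms in \eqref{hfib} and the formula for $m_n$ go the right way because subtracting $1$ moves each argument into the neighbouring dyadic block in a controlled manner.

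The main obstacle I anticipate is purely organizational rather than deep: correctly matching up which piece $[m_n,m_{n+1}]$ contains each of $x$, $2x\pm1$, $4x\pm1$ as $x$ ranges over a fundamental domain (say $x \in [m_k, m_{k+1}]$ for a generic $k$), handling the finitely many small cases near $x = 5$ where the asymptotics are not yet sharp, and keeping the Fibonacci identities $F_{n+1} = F_n + F_{n-1}$, $F_n F_{n-2} - F_{n-1}^2 = (-1)^{n-1}$, etc., straight while simplifying the coefficients $A(x)$ and $B(x)$. Once the difference is written as an explicit piecewise-linear function of $x$ with Fibonacci coefficients, positivity for $x \geq 5$ should follow by checking the breakpoints, using that the leading Fibonacci ratios converge monotonically to powers of $\varphi$ from one side. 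I would carry out the $+1$ case in full and then indicate that the $-1$ case is entirely analogous with the signs of the $(-1)^n$ terms reversed.
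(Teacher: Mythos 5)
Your setup matches the paper's: you locate $x$, $2x+1$, $4x+1$ in consecutive blocks $[m_n,m_{n+1}]$, $[m_{n+1},m_{n+2}]$, $[m_{n+2},m_{n+3}]$ and expand the difference $D(x)=h(4x+1)-h(2x+1)-h(x)$ via \eqref{hfib} as an affine function on each block. But the decisive step --- proving $D(x)>0$ --- is exactly where your proposal stops. You correctly observe that at leading order the inequality is an \emph{equality} (since $\varphi^2=\varphi+1$ gives $g(4x)=g(2x)+g(x)$ for $g(x)=cx^{\log_2\varphi}$), so $D(x)\to 0$ and the sign of $D$ is governed entirely by the lower-order $(-1)^n$ corrections; this cannot be settled by asserting that ``the corrections tip the balance the right way.'' The mechanism you offer in its place --- checking the breakpoints and invoking that ``the leading Fibonacci ratios converge monotonically to powers of $\varphi$ from one side'' --- is unproven and, as stated, false: $F_{n+1}/F_n$ alternates around $\varphi$ (the $(-\varphi)^{-n}$ term in Binet's formula changes sign with $n$), so there is no one-sided monotone convergence to lean on, and your ``finite check'' is in fact an infinite family of parity-dependent Fibonacci inequalities $D(m_n)>0$ that you never verify. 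The paper closes this gap by a different device: it shows $D(x)=O(\varphi^n/2^n)\to 0$ and, separately, that $D'(x)<0$ on every piece (using $F_{n+1}=F_n+F_{n-1}$ and a worst-case replacement of the $\pm 2$ terms), so $D$ decreases strictly to zero and is therefore positive --- no endpoint evaluations are needed.

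A second genuine omission is your claim that the $-1$ case is ``entirely analogous with the signs reversed.'' It is not: for $x=m_n$ with $n$ even one has $2x-1=m_{n+1}-2$, so $x$ and $2x-1$ can lie in the \emph{same} block $[m_n,m_{n+1}]$, and the affine expansion of $h(4x-1)-h(2x-1)-h(x)$ then has different coefficients (a $2F_n$ term in place of $F_n$ and $F_{n-1}$ separately). The paper treats this as a separate case, and also derives the limit $h(4x-1)-h(2x-1)-h(x)\to 0$ from the $+1$ case via $h(x+y)-h(x)\to 0$ rather than by redoing the asymptotics. Your outline would need both of these repairs, plus the explicit positivity verification, before it constitutes a proof.
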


\begin{proof} Consider first the numbers $x,2x+1,$ and $4x+1$. Suppose that $x\in[m_n,m_{n+1})$ for $n\geq 4$, so that the interval is of length at least $5$. Then since $m_{n+1}=2m_n+(-1)^n$, we have $2x+1\in[m_{n+1},m_{n+2}]$ and $4x+1\in[m_{n+2},m_{n+3}]$, so that $x, 2x+1$ and $4x+1$ can be taken from different (yet consecutive) subintervals $[m_i,m_{i+1}]$. 

To make this completely clear, we consider minimal and maximal values for $x$, specifically $x=m_n$ or $x=m_{n+1}-1$. As stated in the previous paragraph, let $n\geq 4$, so that the subintervals of concern are at least of length $5$. For minimal $x\in[m_n,m_{n+1})$, we have $x=m_n$, thus $$2x+1=2m_n+1=2m_n+(-1)^n-(-1)^n+1=m_{n+1}-(-1)^n+1\in[m_{n+1},m_{n+2}],$$ and so \begin{multline*} 4x+1=2(2x+1)-1=2(m_{n+1}-(-1)^n+1)-1\\
=2(m_{n+1}+(-1)^{n+1}+1)-1=2m_{n+1}+2(-1)^{n+1}+2-1\\
=m_{n+2}+(-1)^{n+1}+1\in[m_{n+2},m_{n+3}].\end{multline*} For maximal $x\in[m_n,m_{n+1})$, we have $x=m_{n+1}-1,$ thus \begin{multline*} 2x+1=2(m_{n+1}-1)+1=2m_{n+1}+(-1)^{n+1}-(-1)^{n+1}-1\\=m_{n+2}+(-1)^n-1\in[m_{n+1},m_{n+2}],\end{multline*} and so \begin{multline*} 4x+1=2(2x+1)-1=2(m_{n+2}+(-1)^n-1)-1=2m_{n+2}+2(-1)^n-3\\=2m_{n+2}+(-1)^{n+2}+(-1)^{n+2}-3=m_{n+3}+(-1)^{n+2}-3\in[m_{n+2},m_{n+3}].\end{multline*} Since all other values of $x\in[m_n,m_{n+1})$ are strictly between the minimal and maximal values, we have shown that for $n\geq 4$, if $x\in[m_n,m_{n+1}),$ then $2x+1\in[m_{n+1},m_{n+2}]$ and $4x+1\in[m_{n+2},m_{n+3}]$.

For $x\in[m_n,m_{n+1})$, using \eqref{hfib} and the fact that $x, 2x+1$ and $4x+1$ can be taken from different (yet consecutive) subintervals $[m_i,m_{i+1}]$, we have that \begin{align*} h&(4x+1)-h(2x+1)-h(x)\\
&=\left\{12\cdot\frac{F_{n+1}}{2^{n+2}+2(-1)^{n+2}}-6\cdot\frac{F_{n}}{2^{n+1}+2(-1)^{n+1}}-3\cdot\frac{F_{n-1}}{2^{n}+2(-1)^{n}}\right\}x\\
&\quad+\left\{3\cdot\frac{F_{n+1}}{2^{n+2}+2(-1)^{n+2}}-3\cdot\frac{F_{n}}{2^{n+1}+2(-1)^{n+1}}-0\cdot\frac{F_{n-1}}{2^{n}+2(-1)^{n}}\right\}\\
&\qquad-F_{n+1}\cdot\frac{2^{n+2}-(-1)^{n+2}}{2^{n+2}+2(-1)^{n+2}}+F_{n}\cdot\frac{2^{n+1}-(-1)^{n+1}}{2^{n+1}+2(-1)^{n+1}}+F_{n-1}\cdot\frac{2^{n}-(-1)^{n}}{2^{n}+2(-1)^{n}}\\
&=\mathfrak{S}_1\cdot x+\mathfrak{S}_2+\mathfrak{S}_3,
\end{align*} where $\mathfrak{S}_1,$ $\mathfrak{S}_2$ and $\mathfrak{S}_3$ represent the three-term sums from the three previous lines, respectively. We have 
\begin{align*}
\left|\mathfrak{S}_1\cdot x\right| &\leq\left|12\cdot\frac{F_{n+1}}{2^{n+2}+2(-1)^{n+2}}-6\cdot\frac{F_{n}}{2^{n+1}+2(-1)^{n+1}}-3\cdot\frac{F_{n-1}}{2^{n}+2(-1)^{n}}\right|\cdot m_{n+1}\\
&=2\cdot\left|F_{n+1}\cdot(1+O(2^{-n}))-F_{n}\cdot(1+O(2^{-n}))-F_{n-1}\cdot(1+O(2^{-n}))\right|\\
&=O\left(\frac{\varphi^n}{2^n}\right),
\end{align*} where for the last equality we have used both the Fibonacci recursion and the fact that $F_n=O(\varphi^n)$. Using $F_n=O(\varphi^n)$ again, we immediately gain $$\left|\mathfrak{S}_2\right|=O\left(\frac{\varphi^n}{2^n}\right),$$ and similarly $$\left|\mathfrak{S}_3\right|=\left|F_{n+1}(1+O(2^{-n}))-F_{n}(1+O(2^{-n}))-F_{n-1}(1+O(2^{-n}))\right|=O\left(\frac{\varphi^n}{2^n}\right).$$ Thus $$|h(4x+1)-h(2x+1)-h(x)|\leq |\mathfrak{S}_1\cdot x|+|\mathfrak{S}_2|+|\mathfrak{S}_3|=O\left(\frac{\varphi^n}{2^n}\right).$$ Noting that $2>\varphi$, gives then that \begin{equation*}\lim_{x\to\infty}\big\{h(4x+1)-h(2x+1)-h(x)\big\}=0.\end{equation*}

In addition, this limit is strictly decreasing to zero. To see this, we suppose that $x\in[m_{n},m_{n+1})$ and use the above-established fact that $x,2x+1,$ and $4x+1$ are contained in consecutive subintervals. By \eqref{hfib} we have \begin{align*} \frac{d}{dx}\big\{h(4x+1)&-h(2x+1)-h(x)\big\}\\ 
&= 3\left[\frac{F_{n+1}}{2^{n+2}-2(-1)^{n+2}}-\frac{F_{n}}{2^{n+1}-2(-1)^{n+1}}-\frac{F_{n-1}}{2^{n}-2(-1)^{n}}\right]\\
&<3\left[\frac{F_{n+1}}{2^{n+2}-2}-\frac{F_{n}}{2^{n+1}+2}-\frac{F_{n-1}}{2^{n}+2}\right]\\
&=3\left[\frac{F_{n}+F_{n-1}}{2^{n+2}-2}-\frac{F_{n}}{2^{n+1}+2}-\frac{F_{n-1}}{2^{n}+2}\right]\\
&=3\left[\left(\frac{F_{n}}{2^{n+2}-2}-\frac{F_{n}}{2^{n+1}+2}\right)+\left(\frac{F_{n-1}}{2^{n+2}-2}-\frac{F_{n-1}}{2^{n}+2}\right)\right]\\
&<0.
\end{align*} Thus the function $h(4x+1)-h(2x+1)-h(x)$ is strictly decreasing to zero over the intervals $[m_{i},m_{i+1})$ for $i\geq 4$ and so on these intervals, we have \begin{equation*} h(x)+h(2x+1)<h(4x+1).\end{equation*} The result follows as these intervals partition the real numbers $x\geq 5$.

For the second part of the lemma, by \eqref{hasym} $$\lim_{\substack{x\to\infty\\ x\neq m_n}}\frac{d}{dx}h(x)=\lim_{n\to\infty}\frac{3}{2\sqrt{5}}\left(\frac{\varphi}{2}\right)^{n-1}\left(1+O(2^{-n})\right)= 0,$$ and since $h(x)$ is continuous for all $x\in\mathbb{R}_{\geq 0}$, we have that for any fixed number $y$, $$\lim_{x\to\infty}\big\{h(x+y)-h(x)\big\}=0.$$ Thus $$\lim_{x\to\infty}\big\{h(4x+1)-h(2x+1)-h(x)-[h(4x-1)-h(2x-1)-h(x)]\big\}=0,$$ and so \begin{equation*}\lim_{x\to\infty}\big\{h(4x-1)-h(2x-1)-h(x)\big\}=0.\end{equation*} 

If $x,2x-1,$ and $4x-1$ are in three distinct consecutive subintervals $[m_i,m_{i+1}]$, then the desired inequality follows repeating the above argument mutatis mutandis. 

If the numbers $x,2x-1,$ and $4x-1$ are not in three distinct subintervals, then it must be the case that $x,2x-1\in[m_n,m_{n+1}]$ and $4x-1\in[m_{n+1},m_{n+2}]$ for some $n$ as the numbers $2x-1$ and $4x-1$ can be always taken in distinct consecutive subintervals since $2(2x-1)+1=4x-1$. In this case, by \eqref{hfib} we have \begin{align*} \frac{d}{dx}\big\{h(4x-1)&-h(2x-1)-h(x)\big\}\\ 
&= 3\left[\frac{F_{n+1}}{2^{n+2}-2(-1)^{n+2}}-\frac{2F_{n}}{2^{n+1}-2(-1)^{n+1}}\right]\\
&<3\left[\frac{F_{n+1}}{2^{n+2}-2}-\frac{2F_{n}}{2^{n+1}+2}\right]\\
&=3\left[\frac{F_{n}+F_{n-1}}{2^{n+2}-2}-\frac{2F_{n}}{2^{n+1}+2}\right]\\
&=3\left[\left(\frac{F_{n}}{2^{n+2}-2}-\frac{F_{n}}{2^{n+1}+2}\right)+\left(\frac{F_{n-1}}{2^{n+2}-2}-\frac{F_{n}}{2^{n+1}+2}\right)\right]\\
&<3\left[\left(\frac{F_{n}}{2^{n+2}-2}-\frac{F_{n}}{2^{n+1}+2}\right)+\left(\frac{F_{n-1}}{2^{n+2}-2}-\frac{F_{n-1}}{2^{n+1}+2}\right)\right]\\
&<0.
\end{align*} Thus, as in the previous case, the function $h(4x-1)-h(2x-1)-h(x)$ is strictly decreasing to zero over the intervals $[m_{i},m_{i+1})$ for $i\geq 4$, and so on these intervals, we have \begin{equation*}\label{hgreat} h(x)+h(2x-1)<h(4x-1).\end{equation*} This completes the proof of the lemma.
\end{proof}

\begin{lemma}\label{ahlimsup} For all $m$, we have $a(m)\leq h(m).$ Moreover, $$\limsup_{m\to\infty}\frac{a(m)}{h(m)}=1.$$
\end{lemma}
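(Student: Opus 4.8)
The plan is to establish the pointwise bound $a(m)\le h(m)$ by strong induction on $m$, using Lemma~\ref{hineq} to handle the two odd residue classes modulo $4$, and then to read off the $\limsup$ statement directly from the definition of $h$.

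First I would record that $h$ is strictly increasing on $\mathbb{R}_{\ge 0}$: on $[0,m_2]=[0,1]$ its slope is $1$, and on each interval $[m_n,m_{n+1}]$ with $n\ge 2$ its slope is $(F_{n+1}-F_n)/(m_{n+1}-m_n)=F_{n-1}/(m_{n+1}-m_n)>0$ by \eqref{hfib}; in particular $h(k)\le h(2k)$ for every integer $k\ge 0$. Then, after checking the finitely many small cases $m\le 21$ directly — using $a(m_n)=F_n$, so that the nodes of $h$ are the points $(m_n,F_n)$ — I would run the induction as follows. Suppose $m>21$ and $a(j)\le h(j)$ for all $j<m$. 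If $m=2k$ is even, then $a(m)=a(k)\le h(k)\le h(2k)=h(m)$ by the inductive hypothesis and monotonicity. If $m\equiv 1\pmod 4$, write $m=4x+1$; from $a(2n)=a(n)$ and $a(2n+1)=a(n)+a(n+1)$ one has $a(4x+1)=a(2x)+a(2x+1)=a(x)+a(2x+1)$, so the inductive hypothesis applied to $x<m$ and $2x+1<m$, together with the first inequality of Lemma~\ref{hineq}, gives $a(m)\le h(x)+h(2x+1)<h(4x+1)=h(m)$. Symmetrically, if $m\equiv 3\pmod 4$, write $m=4x-1$, so that $a(4x-1)=a(2x-1)+a(2x)=a(2x-1)+a(x)\le h(2x-1)+h(x)<h(4x-1)=h(m)$, now using the second inequality of Lemma~\ref{hineq}. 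Since every odd $m>21$ is of the form $4x\pm1$ with $x\ge 5$, this would close the induction and prove $a(m)\le h(m)$ for all $m$.

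For the $\limsup$, I would first note that $h(m)\ge h(1)=1>0$ for $m\ge 1$, so the bound just proved gives $a(m)/h(m)\le 1$ and hence $\limsup_{m\to\infty}a(m)/h(m)\le 1$. On the other hand, the definition of $h$ forces $h(m_n)=a(m_n)$ for every $n\ge 2$, so $a(m_n)/h(m_n)=1$; since $m_n\to\infty$, the $\limsup$ is at least $1$, and equality follows.

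I do not expect a genuine obstacle once Lemma~\ref{hineq} is available: that lemma is exactly the super-additivity of $h$ along the triples $(x,2x+1,4x+1)$ and $(x,2x-1,4x-1)$, tailored to the recursion identities $a(4x\pm1)=a(x)+a(2x\pm1)$ used above. The two points needing care are checking that the case split on $m\bmod 4$ always passes to strictly smaller arguments, so that the strong induction is well founded, and pinning down the finite base set, which must include every odd $m=4x\pm1$ with $x\le 4$ since Lemma~\ref{hineq} is stated only for $x\ge 5$.
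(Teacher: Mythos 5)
Your proposal is correct and takes essentially the same route as the paper: strong induction on $m$ with a finite base check, the even case handled by monotonicity of $h$, the two odd residue classes $m=4x\pm1$ handled by the corresponding inequalities of Lemma~\ref{hineq} together with the recursion $a(4x\pm1)=a(x)+a(2x\pm1)$, and the $\limsup$ read off from $a(m_n)=h(m_n)=F_n$. The only cosmetic differences are that the paper parametrizes the $4k+3$ case as $4(k+1)-1$ and tabulates the base cases up to $m=22$ rather than $21$.
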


\begin{proof} If $m=1,\ldots,22$, then we have $a(m)\leq h(m)$. See Table \ref{ah22} for the values of $a(m)$ and $h(m)$ to verify this.

\begin{table}[htdp]
\caption{The first 22 values for $a(m)$ and $h(m)$.}
\begin{center}
\begin{tabular}{|c|c|c|c|c|c|c|c|c|c|c|c|c|c|} \hline
$m$ & 1 & 2 & 3 & 4 & 5 & 6 & 7 & 8 & 9 & 10 & 11 & 12 & 13\\ \hline
$a(m)$ &  1 & 1& 2& 1& 3& 2& 3& 1& 4& 3& 5 & 2 & 5\\ 
$h(m)$ & 1 & 3/2 &2  & 5/2 &3  & 10/3 & 11/3 & 4 & 13/3 & 14/3 &5 & 53/10 & 28/5\\ \hline
\end{tabular}

\vspace{.2cm}
\begin{tabular}{|c|c|c|c|c|c|c|c|c|c|c|c|} \hline
$m$ & 14 & 15 & 16 & 17 & 18 & 19 & 20 & 21 & 22\\ \hline
$a(m)$ & 3& 4& 1& 5& 4& 7& 3& 8& 5\\
$h(m)$ & 59/10 & 31/5 & 13/2 & 34/5 & 71/10 & 37/5 & 77/10 &8  & 181/22\\ \hline
\end{tabular}
\end{center}
\label{ah22}
\end{table}%

Now suppose that $m\geq 23$ and that the assertion holds for all $k<m$. We consider the three cases, of $m$ even or odd modulo $4$, separately. 

If $m=2k$ for some $k$, then $$a(m)=a(2k)=a(k)\leq h(k)\leq h(2k)=h(m),$$ where we have used here that $h(x)$ is a monotone increasing function. 


If $m=4k+1$, then $m=2(2k)+1$, so that using the recursion of the Stern sequence combined with Lemma \ref{hineq} setting $x=k=\frac{m-1}{4}>\frac{22-1}{4}>5$, we have \begin{multline*} a(m)=a(2(2k)+1)=a(2k)+a(2k+1)\\=a(k)+a(2k+1) \leq h(k)+h(2k+1)<h(4k+1)=h(m).\end{multline*}

If $m=4k+3$, then $m=2(2k+1)+1$, so that using the same properties used in the previous sentence, again combined with Lemma \ref{hineq} setting $x=k+1=\frac{m+1}{4}>\frac{22+1}{5}>5$, we have \begin{multline*}a(m)=a(2(2k+1)+1)=a(2k+1)+a(2k+2) =a(2k+1)+a(k+1)\\ \leq h(k+1)+h(2k+1)=h(k+1)+h(2(k+1)-1)\\ <h(4(k+1)-1)=h(4k+3)=h(m).\end{multline*} This proves that for all $m$, $a(m)\leq h(m)$. 

For the limit result, note that \begin{equation*}1=\lim_{n\to\infty} \frac{a(m_n)}{h(m_n)}\leq \limsup_{m\to\infty}\frac{a(m)}{h(m)}\leq \limsup_{m\to\infty}\frac{h(m)}{h(m)}=1.\qedhere\end{equation*}
\end{proof}

\begin{lemma}\label{hfeps} Let $\varepsilon>0$ be given. Then for large enough $x$, we have $$\sqrt{5}\cdot h(x)\leq \varphi^{\log_23}x^{\log_2 \varphi}+\varepsilon.$$
\end{lemma}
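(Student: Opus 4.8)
The plan is to compare the piecewise linear function $\sqrt5\,h$ against the smooth comparison function $g(x):=\varphi^{\log_2 3}\,x^{\log_2\varphi}$ one mesh interval at a time, exploiting that $g$ is concave while $h$ is affine on each $[m_n,m_{n+1}]$. Since $0<\log_2\varphi<1$, a one‑line computation gives $g''(x)=\varphi^{\log_2 3}(\log_2\varphi)(\log_2\varphi-1)x^{\log_2\varphi-2}<0$ on $(0,\infty)$, so $g$ is concave there; hence on each $[m_n,m_{n+1}]$ the function $x\mapsto\sqrt5\,h(x)-g(x)$ is convex (an affine function minus a concave one) and therefore attains its maximum at an endpoint of the interval. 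Using the fact recorded in the Preliminaries that $h(m_n)=a(m_n)=F_n$, this yields, for every $x\in[m_n,m_{n+1}]$,
\[
\sqrt5\,h(x)-g(x)\ \le\ \max\bigl\{\sqrt5\,F_n-g(m_n),\ \sqrt5\,F_{n+1}-g(m_{n+1})\bigr\}.
\]

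The second step is to show that $\sqrt5\,F_n-g(m_n)\to0$ as $n\to\infty$. By Binet's formula, $\sqrt5\,F_n=\varphi^n-(-\varphi)^{-n}=\varphi^n+O(\varphi^{-n})$. For the other term, write $m_n=\tfrac{2^n}{3}\bigl(1-(-1)^n2^{-n}\bigr)$; then
\[
g(m_n)=\varphi^{\log_2 3}\Bigl(\tfrac{2^n}{3}\Bigr)^{\log_2\varphi}\bigl(1-(-1)^n2^{-n}\bigr)^{\log_2\varphi}
=\varphi^{\log_2 3}\cdot\varphi^{n}\cdot 3^{-\log_2\varphi}\cdot\bigl(1+O(2^{-n})\bigr),
\]
and the identity $3^{-\log_2\varphi}=\varphi^{-\log_2 3}$ (both sides equal $2^{-(\log_2 3)(\log_2\varphi)}$) collapses the constant factors to $1$, giving $g(m_n)=\varphi^{n}\bigl(1+O(2^{-n})\bigr)=\varphi^{n}+O\bigl((\varphi/2)^n\bigr)$. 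Since $\varphi<2$, both error contributions vanish, so $\sqrt5\,F_n-g(m_n)=O(\varphi^{-n})+O\bigl((\varphi/2)^n\bigr)\to0$.

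Finally, given $\varepsilon>0$, pick $N$ so that $|\sqrt5\,F_n-g(m_n)|\le\varepsilon$ whenever $n\ge N$. The intervals $[m_n,m_{n+1}]$ with $n\ge N$ cover $[m_N,\infty)$ (the $m_n$ are increasing for $n\ge2$ and tend to infinity), so any $x\ge m_N$ lies in one of them, and the displayed bound immediately gives $\sqrt5\,h(x)-g(x)\le\varepsilon$, which is the claimed inequality; the initial segment of $h$ joining the origin to $(m_2,a(m_2))$ plays no role since only large $x$ matter. I do not expect a serious obstacle: the only points needing any care are the elementary convexity fact that a convex function on a closed interval is maximized at an endpoint, and the exponent bookkeeping forcing $\varphi^{\log_2 3}\cdot 3^{-\log_2\varphi}=1$, so that $\sqrt5\,F_n$ and $g(m_n)$ share the leading term $\varphi^{n}$ and their difference is genuinely negligible. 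One could instead run the same comparison directly from the expansion \eqref{hasym}, but then one must also track the multiplicative $O(2^{-n})$ factor, which is why working with the exact vertex values $\sqrt5\,F_n$ is cleaner.
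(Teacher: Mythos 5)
Your proposal is correct and is essentially the paper's argument: both establish $\sqrt5\,h(m_n)-\varphi^{\log_2 3}m_n^{\log_2\varphi}=O((\varphi/2)^n)$ at the vertices via Binet's formula and the exponent identity, and both then use that $\sqrt5\,h-\varphi^{\log_2 3}x^{\log_2\varphi}$ has positive second derivative on each $(m_n,m_{n+1})$ to push the bound from the endpoints to the whole interval. The only difference is presentational: you invoke convexity directly (maximum at an endpoint), whereas the paper phrases the same second-derivative fact as a contradiction with an interior maximum — your version is, if anything, slightly cleaner.
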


\begin{proof} Note that at the points $(m_n,a(m_n))$, we have $h(m_n)=a(m_n)=F_n$ by construction. Thus, noting that $$\varphi^{\log_2\left(1-\frac{(-1)^n}{2^n}\right)}=1+O(2^{-n}),$$ we have \begin{align*} \sqrt{5}\cdot h(m_n)- \varphi^{\log_23}\varphi^{\log_2m_n}&=\sqrt{5}\cdot F_n- \varphi^{\log_23}\varphi^{\log_2m_n}\\ &=\varphi^n-\frac{(-1)^n}{\varphi^n}-\varphi^n\varphi^{\log_2\left(1-\frac{(-1)^n}{2^n}\right)}=O\left(\left(\frac{\varphi}{2}\right)^n\right).\end{align*} This shows that the result holds for $x=m_n$ for large enough $n$. 

Using this, let $\varepsilon>0$ be given and let $N$ be large enough so that we have \begin{equation}\label{hf1} \sqrt{5}\cdot h(m_n)-\varphi^{\log_23}\varphi^{\log_2m_n}<\varepsilon\end{equation} for all $m_n>N$ and, towards a contradiction, suppose that there is an $x_n\in(m_n,m_{n+1})$ such that \begin{equation}\label{hf2} \sqrt{5}\cdot h(x_n)-\varphi^{\log_23}\varphi^{\log_2x_n}\geq \varepsilon.\end{equation} Then since $\sqrt{5}\cdot h(x)-\varphi^{\log_23}x^{\log_2 \varphi}$ is differentiable for $x\in(m_n,m_{n+1})$, by \eqref{hf1} and \eqref{hf2}, the function attains a maximum value at some $x\in(m_n,m_{n+1})$. Thus there is an $x\in(m_n,m_{n+1})$ such that the second derivative of $\sqrt{5}\cdot h(x)-\varphi^{\log_23}x^{\log_2 \varphi}$ is negative. But \begin{align*}\frac{d^2}{dx^2}\left\{\sqrt{5}\cdot h(x)-\varphi^{\log_23}x^{\log_2 \varphi}\right\}&=\frac{d^2}{dx^2}\left\{-\varphi^{\log_23}x^{\log_2\varphi}\right\}\\ &=-\varphi^{\log_23}\log_2\varphi\cdot(\log_2\varphi-1)x^{\log_2\varphi-2},\end{align*} which is positive for all $x\in(m_n,m_{n+1})$ as $$-\varphi^{\log_23}\log_2\varphi\cdot(\log_2\varphi-1)>0.$$ Thus we arrive at the contradiction, which proves the lemma.
\end{proof}

\section{The maximal growth of Stern's diatomic sequence}

We are now in a position to prove our main result. 

\begin{proof}[Proof of Theorem \ref{main}] By Lemma \ref{hfeps} we have for large enough $x$ that \begin{equation*} h(x)\leq \frac{\varphi^{\log_23}}{\sqrt{5}}x^{\log_2 \varphi}+\varepsilon,\end{equation*} and by Lemma \ref{ahlimsup} we have $$\limsup_{n\to\infty} \frac{a(n)}{h(n)}=1.$$ Thus we have for large enough $n$ that $$\frac{a(n)}{\frac{\varphi^{\log_23}}{\sqrt{5}}n^{\log_2 \varphi}+\varepsilon}\leq\frac{a(n)}{h(n)},$$ and so we have \begin{equation}\label{star}\limsup_{n\to\infty}\frac{a(n)}{\frac{\varphi^{\log_23}}{\sqrt{5}}n^{\log_2 \varphi}+\varepsilon}\leq\limsup_{n\to\infty}\frac{a(n)}{h(n)}\leq 1.\end{equation}

Inequality \eqref{star} is true for every positive $\varepsilon$, so that \begin{equation}\label{star2} \limsup_{n\to\infty}\frac{a(n)}{\frac{\varphi^{\log_23}}{\sqrt{5}}n^{\log_2 \varphi}}\leq 1.\end{equation} 

But we also have that $$\frac{a(m_n)}{\frac{\varphi^{\log_23}}{\sqrt{5}}m_n^{\log_2\varphi}}=\frac{F_n}{\frac{\varphi^{\log_23}}{\sqrt{5}}\varphi^{\log_2m_n}}=\frac{\varphi^n-\frac{(-1)^n}{\varphi^n}}{\varphi^n\varphi^{\log_2\left(1-\frac{(-1)^n}{2^n}\right)}}=1+O(2^{-n}).$$ Thus by \eqref{star2} we have $$1=\lim_{n\to\infty}\frac{a(m_n)}{\frac{\varphi^{\log_23}}{\sqrt{5}}m_n^{\log_2\varphi}}\leq \limsup_{n\to\infty}\frac{a(n)}{\frac{\varphi^{\log_23}}{\sqrt{5}}n^{\log_2 \varphi}}\leq 1,$$ which proves the theorem.
\end{proof}

\noindent{\bf Acknowledgements.} We thank the anonymous referee for a very careful reading.


\providecommand{\bysame}{\leavevmode\hbox to3em{\hrulefill}\thinspace}
\providecommand{\MR}{\relax\ifhmode\unskip\space\fi MR }
\providecommand{\MRhref}[2]{%
  \href{http://www.ams.org/mathscinet-getitem?mr=#1}{#2}
}
\providecommand{\href}[2]{#2}

\end{document}